\newtheorem{theorem}{Theorem}
\newtheorem{lemma}[theorem]{Lemma}
\newtheorem{corollary}[theorem]{Corollary}
\theoremstyle{remark}
\newtheorem{remark}{Remark}
\numberwithin{equation}{section}
\begin{document}

\title[Branching Formula for Macdonald-Koornwinder Polynomials]
{Branching Formula for Macdonald-Koornwinder Polynomials}

\author{J.F.  van Diejen}

\address{
Instituto de Matem\'atica y F\'{\i}sica, Universidad de Talca,
Casilla 747, Talca, Chile}

\email{diejen@inst-mat.utalca.cl}

\author{E. Emsiz}

\address{
Facultad de Matem\'aticas, Pontificia Universidad Cat\'olica de Chile,
Casilla 306, Correo 22, Santiago, Chile}
\email{eemsiz@mat.puc.cl}

\subjclass[2000]{33D52, 05E05}
\keywords{Macdonald-Koornwinder polynomials, hyperoctahedral symmetric functions, branching rules}

\thanks{This work was supported in part by the {\em Fondo Nacional de Desarrollo
Cient\'{\i}fico y Tecnol\'ogico (FONDECYT)} Grants \# 1130226 and  \# 1141114.}

\date{August 2014}

\begin{abstract}
We present an explicit branching formula for the six-parameter Macdonald-Koornwinder polynomials with hyperoctahedral symmetry.
\end{abstract}

\maketitle

\section{Introduction}\label{sec1}
Branching formulas constitute a powerful tool in algebraic combinatorics providing a recursive scheme to build symmetric polynomials via induction in the number of variables
\cite{mac:symmetric,las-war:branching}. 
The combinatorial aspects of the hyperoctahedral-symmetric Macdonald-Koornwinder polynomials \cite{mac:orthogonal,koo:askey-wilson}  were explored in seminal  works of Okounkov and Rains
\cite{ouk:bc-type,rai:bcn-symmetric}.   In particular, the structure of a branching formula for the Macdonald-Koornwinder polynomials has been outlined at the end of \cite[Sec.~5]{rai:bcn-symmetric}. The aim of the present note is to make the branching polynomials under consideration explicit. Following the ideas underlying the proof of the branching formula for the Macdonald polynomials \cite[Ch.~VI.7]{mac:symmetric},  our main tools to achieve this goal consist of:
Mimachi's Cauchy formula for the Macdonald-Koornwinder polynomials \cite{mim:duality}, (a special `column-row' case of) the Cauchy formula for Okounkov's  hyperoctahedral interpolation polynomials \cite{ouk:bc-type}, and explicitly known Pieri coefficients for the Macdonald-Koornwinder polynomials \cite{die:self-dual,die:properties}.

The material is structured as follows. After recalling some necessary preliminaries regarding the Macdonald-Koorwinder polynomials and their Pieri formulas in Section \ref{sec2},  our branching formula is first stated in Section \ref{sec3} and then proven in Section \ref{sec4}.

\section{Preliminaries}\label{sec2} 
\subsection{Macdonald-Koornwinder polynomials  \cite{koo:askey-wilson}}
For a partition
\begin{equation}
\lambda\in\Lambda_n:=\{(\lambda_1,\ldots, \lambda_n)\in\mathbb{Z}^n\mid \lambda_1\geq\lambda_2\geq \cdots \geq\lambda_n\geq 0\} ,
\end{equation}
the monic Macdonald-Koornwinder polynomial
$P_\lambda(z_1,\ldots ,z_n;q,t,\mathbf{t})$ is a
Laurent polynomial in the complex variables $z_1,\ldots ,z_n$  that depends rationally on the parameters $q,t$ and $\mathbf{t}:=(t_0,t_1,t_2,t_3)$. It is determined by a leading monomial of the form $z_1^{\lambda_1}z_2^{\lambda_2}\cdots z_n^{\lambda_n}$, while being symmetric with respect to the action of the hyperoctahedral group $W=S_n\ltimes \mathbb{Z}_2^n$ by permutations and inversions of the variables. For
real parameter values in the domain 
 $0<q,|t|,|t_l|<1$ ($l=0,1,2,3$), the Macdonald-Koornwinder polynomials form an orthogonal system on the $n$-dimensional torus $|z_j|=1$, $j=1,\ldots ,n$. The orthogonality measure is given by Gustafson's $q$-Selberg type density \cite{gus:generalization}
$$
\Delta =
\prod_{1\leq j\leq n} \frac{(z_j^2,z_j^{-2};q)_\infty }{\prod_{0\leq l\leq 3} (t_l z_j,t_l z_j^{-1};q)_\infty}
\prod_{1\leq j<k\leq n} \frac{(z_jz_k,z_jz_k^{-1},z_j^{-1}z_k,z_j^{-1}z_k^{-1};q)_\infty }{(tz_jz_k,tz_jz_k^{-1},tz_j^{-1}z_k,tz_j^{-1}z_k^{-1};q)_\infty }
$$
with respect to the Haar measure on this torus.
Here and below we employ standard conventions for the $q$-Pochhammer symbols: $(a;q)_k:=(1-a)(1-aq)\cdots (1-aq^{k-1})$ (with $(a;q)_0:= 1$) and $(a_1,\ldots,a_l;q)_k:=(a_1;q)_k\cdots (a_l;q)_k$.

\subsection{Pieri coefficients \cite{die:self-dual,die:properties,sah:nonsymmetric}}
The $W$-invariant Laurent polynomials
\begin{equation}
E_r (z_1,\ldots ,z_n;t,t_0)=\sum_{1\leq j_1<\cdots < j_r\leq n} \langle   z_{j_1} ; t^{j_1-1}t_0\rangle \cdots \langle   z_{j_r} ; t^{j_r-1}t_0\rangle 
\end{equation}
$ (r=1,\ldots ,n )$, with $\langle z;x\rangle:=z+z^{-1}-x-x^{-1}$, are special instances of Okounkov's hyperoctahedral interpolation polynomials \cite{ouk:bc-type}---with shifted variables as considered by
Rains \cite{rai:bcn-symmetric}---that correspond to the partitions with only a single column  \cite{kom-nou-shi:kernel}. They describe the eigenvalues of commuting difference operators diagonalized by the Macdonald-Koornwinder polynomials
\cite{die:commuting}  and are also instrumental in Ito's Aomoto-style proof \cite{ito:q-difference} of the hyperoctahedral
${}_6\Psi_6$ sum evaluated in Ref. \cite{die:certain}.

Let $C^{\mu,n}_{\lambda ,r }(q,t,\mathbf{t})$ denote the coefficients in the Macdonald-Koornwinder Pieri-expansions associated with these one-column interpolation polynomials:
\begin{equation}\label{pieri}
E_r (z_1,\ldots ,z_n;t,t_0) P_\lambda(z_1,\ldots ,z_n;q,t,\mathbf{t})  = \sum_{\substack{\mu\in\Lambda_n \\ \mu\sim_r\lambda}}C^{\mu,n}_{\lambda , r}(q,t,\mathbf{t})P_\mu (z_1,\ldots ,z_n;q,t,\mathbf{t}) , 
\end{equation}
$r=1,\ldots ,n$. To filter only the nonvanishing coefficients, we have employed the following proximity relation within $\Lambda_n$  restricting the sum on the RHS:
$\mu\sim_r\lambda$ 
iff there exists a partition $\nu\in\Lambda_n$ with $\nu\subset \lambda$ and $\nu\subset \mu$ such that the skew diagrams $ \lambda /\nu $ and $ \mu / \nu$ are vertical strips with
 $| \lambda /\nu| +| \mu/\nu |\leq r$. Here $|\cdot |$ refers to the number of boxes of the diagram, $\nu\subset\lambda$ means that $\nu\in\Lambda_n$ is contained in $\lambda$: $\nu_j\leq \lambda_j$ ($j=1,\ldots ,n$), and (recall) the skew diagram $\lambda/\nu$ is a vertical strip iff $\nu_j\leq\lambda_j\leq\nu_j+1$ ($j=1,\ldots ,n$). 
 
Upon writing $J=\{ 1\leq j\leq n \mid \lambda_j\neq \mu_j\}$, $J^c=\{1,\ldots ,n\}\setminus J$,  and
$\epsilon_j=\mu_j-\lambda_j$ for $j\in J$ (so, if $\mu\sim_r\lambda$
the cardinality $|J|$ of $J$ is at most $ r$ and $\epsilon_j\in \{ 1,-1\}$),
one can express the Pieri coefficients in question explicitly as follows:
\begin{equation}
C^{\mu,n}_{\lambda , r}(q,t,\mathbf{t})= \frac{P_\lambda(\hat{\tau}_1,\ldots ,\hat{\tau}_n;q,t,\mathbf{t})}{P_\mu(\hat{\tau}_1,\ldots ,\hat{\tau}_n;q,t,\mathbf{t})} V^{n}_{\epsilon J} (\lambda ;q,t,\mathbf{t}) U^{n}_{J^c,r-|J|} (\lambda ; q,t,\mathbf{t}) ,
\end{equation}
where
\begin{eqnarray*}
\lefteqn{P_\lambda(\hat{\tau}_1,\ldots ,\hat{\tau}_n;q,t,\mathbf{t})=}&& \\
&& \prod_{1\leq j\leq n}  \frac{\prod_{0\leq l\leq 3} (\hat{t}_l\hat{\tau}_j ;q)_{\lambda_j }}{\tau_j^{\lambda_j} (\hat{\tau}_j^2 ;q)_{2\lambda_j}} 
\prod_{1\leq j<k\leq n} \frac{(t\hat{\tau}_j\hat{\tau}_k;q)_{\lambda_j+\lambda_k} (t\hat{\tau}_j\hat{\tau}_k^{-1};q)_{\lambda_j-\lambda_k}}{(\hat{\tau}_j\hat{\tau}_k;q)_{\lambda_j+\lambda_k}(\hat{\tau}_j\hat{\tau}_k^{-1};q)_{\lambda_j-\lambda_k}}  ,
\end{eqnarray*}

\begin{align*}
V^{n}_{\epsilon J} (\lambda ;q,t,\mathbf{t})&=\prod_{j\in J} \frac{\prod_{0\leq l\leq 3}   (1-\hat{t}_l\hat{\tau}_j^{\epsilon_j}q^{\epsilon_j\lambda_j})}{t_0(1-\hat{\tau}_j^{2\epsilon_j}q^{2\epsilon_j\lambda_j})(1-\hat{\tau}_j^{2\epsilon_j}q^{2\epsilon_j\lambda_j+1})} \\
&\times \prod_{\substack{j,j^\prime\in J\\ j<j^\prime}}
\frac{(1-t\hat{\tau}_j^{\epsilon_j}\hat{\tau}_{j^\prime}^{\epsilon_{j^\prime}}q^{\epsilon_j\lambda_j+\epsilon_{j^\prime}\lambda_{j^\prime}})(1-t\hat{\tau}_j^{\epsilon_j}\hat{\tau}_{j^\prime}^{\epsilon_{j^\prime}}q^{\epsilon_j\lambda_j+\epsilon_{j^\prime}\lambda_{j^\prime}+1})}{t(1-\hat{\tau}_j^{\epsilon_j}\hat{\tau}_{j^\prime}^{\epsilon_{j^\prime}}q^{\epsilon_j\lambda_j+\epsilon_{j^\prime}\lambda_{j^\prime}})(1-\hat{\tau}_j^{\epsilon_j}\hat{\tau}_{j^\prime}^{\epsilon_{j^\prime}}q^{\epsilon_j\lambda_j+\epsilon_{j^\prime}\lambda_{j^\prime}+1})} \\
&\times \prod_{j\in J,k\in J^c} \frac{(1-t\hat{\tau}_j^{\epsilon_j}\hat{\tau}_k q^{\epsilon_j\lambda_j+\lambda_k})(1-t\hat{\tau}_j^{\epsilon_j}\hat{\tau}_k^{-1}q^{\epsilon_j\lambda_j-\lambda_k})}{t(1-\hat{\tau}_j^{\epsilon_j}\hat{\tau}_k q^{\epsilon_j\lambda_j+\lambda_k})(1-\hat{\tau}_j^{\epsilon_j}\hat{\tau}_k^{-1}q^{\epsilon_j\lambda_j-\lambda_k})}
 ,
\end{align*}
and
\begin{align*}
 U^{n}_{K,p} (\lambda ;q,t,\mathbf{t})&=  (-1)^p \sum_{\substack{I\subset K,\, |I|=p \\ \epsilon_i\in \{ 1 , -1\} , i\in I}}
 \Biggl( \prod_{i\in I} \frac{\prod_{0\leq l\leq 3}   (1-\hat{t}_l\hat{\tau}_i^{\epsilon_i}q^{\epsilon_i\lambda_i})}{t_0(1-\hat{\tau}_i^{2\epsilon_i}q^{2\epsilon_i\lambda_i})(1-\hat{\tau}_i^{2\epsilon_i}q^{2\epsilon_i\lambda_i+1})} \\
&\times \prod_{\substack{i,i^\prime\in I\\ i<i^\prime}}
\frac{(1-t\hat{\tau}_i^{\epsilon_i}\hat{\tau}_{i^\prime}^{\epsilon_{i^\prime}}q^{\epsilon_i\lambda_i+\epsilon_{i^\prime}\lambda_{i^\prime}})(1-t^{-1}\hat{\tau}_i^{\epsilon_i}\hat{\tau}_{i^\prime}^{\epsilon_{i^\prime}}q^{\epsilon_i\lambda_i+\epsilon_{i^\prime}\lambda_{i^\prime}+1})}{(1-\hat{\tau}_i^{\epsilon_i}\hat{\tau}_{i^\prime}^{\epsilon_{i^\prime}}q^{\epsilon_i\lambda_i+\epsilon_{i^\prime}\lambda_{i^\prime}})(1-\hat{\tau}_i^{\epsilon_i}\hat{\tau}_{i^\prime}^{\epsilon_{i^\prime}}q^{\epsilon_i\lambda_i+\epsilon_{i^\prime}\lambda_{i^\prime}+1})} \\
&\times \prod_{i\in I,k\in K\setminus I} \frac{(1-t\hat{\tau}_i^{\epsilon_i}\hat{\tau}_k q^{\epsilon_i\lambda_i+\lambda_k})(1-t\hat{\tau}_i^{\epsilon_i}\hat{\tau}_k^{-1}q^{\epsilon_i\lambda_i-\lambda_k})}{t(1-\hat{\tau}_i^{\epsilon_i}\hat{\tau}_k q^{\epsilon_i\lambda_i+\lambda_k})(1-\hat{\tau}_i^{\epsilon_i}\hat{\tau}_k^{-1}q^{\epsilon_i\lambda_i-\lambda_k})} \Biggr)
\end{align*}
for $p=0,\ldots ,|K|$ (with the convention that $V^{n}_{\epsilon J} (\lambda ;q,t,\mathbf{t})=1$ if $J$ is empty and 
 $U^{n}_{K,p} (\lambda ;q,t,\mathbf{t})=1$ if $p=0$).
Here $$\tau_j=t^{n-j}t_0, \quad \hat{\tau}_j=t^{n-j}\hat{t}_0 \quad (j=1,\ldots ,n),$$ and
\begin{equation*}
\hat{t}_0^2=q^{-1}t_0t_1t_2t_3,\qquad \hat{t}_0\hat{t}_l=t_0t_l\quad (l=1,2,3) .
\end{equation*}

\begin{remark}
Below we will employ a trivially extended notion of
$E_r (z_1,\ldots ,z_n;t,t_0)$ and $C^{\mu,n}_{\lambda , r}(q,t,\mathbf{t})$ that allows $r$ and $n$ to become equal to zero. By convention $E_0 (z_1,\ldots ,z_n;t,t_0):=1$,  whence for the corresponding coefficients $C^{\mu,n}_{\lambda , 0}(q,t,\mathbf{t})=1$ if $\mu=\lambda$ and vanishes otherwise.
\end{remark}

\section{Branching formula}\label{sec3}
Let us recall that for $\mu\subset \lambda\in\Lambda_n$, the skew diagram $\lambda/\mu$ is a horizontal strip provided the parts of $\lambda$ and $\mu$ interlace as follows:
$$
\lambda_1\geq\mu_1\geq\lambda_2\geq\mu_2\geq\cdots \geq\lambda_n\geq\mu_n .
$$
We will need the following relation in $\Lambda_n$ expressing that the partition $\lambda$ can be obtained form $\mu\subset\lambda$ by adding at most two horizontal strips:  $\mu \preceq \lambda$ iff there exists a $\nu\in\Lambda_n$ with $\mu\subset\nu\subset\lambda$ such that the skew diagrams $\lambda/\nu$ and $\nu/\mu$ are horizontal strips. From now on we will think of $\Lambda_n$ as being embedded in $\Lambda_{n+1}$ in the natural way (i.e. `by adding a part of size zero'). The main result of this note is given by the following branching formula for the Macdonald-Koornwinder polynomials, the proof of which is delayed until Section \ref{sec4} below.

We denote by $m^n\in\Lambda_n$ the rectangular partition such that $(m^n)_j=m$ ($j=1,\ldots,n$)
and---more generally---by $m^n-\mu$ with $\mu\subset m^n$ the partition such that $(m^n-\mu)_j=m-\mu_{n+1-j}$ ($j=1,\ldots,n$). Finally, we write
$\lambda^\prime\in\Lambda_m$ ($m\geq  \lambda_1$) for the conjugate partition of $\lambda\in\Lambda_n$, i.e. with
$\lambda^\prime_i$ counting the number of parts of $\lambda$ that are greater or equal than $ i$ ($i=1,\ldots ,m$).
\begin{theorem}[Branching Formula]\label{branching:thm}
For $\lambda\in \Lambda_{n+1}$, the Macdonald-Koornwinder polynomial in $(n+1)$ variables expands in terms of the $n$-variable polynomials as
\begin{subequations}
\begin{equation}
P_\lambda (z_1,\ldots ,z_n,x; q ,t ,\mathbf{t})=
\sum_{\substack{\mu\in\Lambda_n\\ \mu \preceq \lambda}}  P_\mu (z_1,\ldots ,z_n ; q ,t ,\mathbf{t})
P_{\lambda/\mu}(x; q ,t ,\mathbf{t}),
\end{equation}
with one-variable branching polynomials of degree
$d=|\{ 1\leq j\leq m \mid \lambda_j^\prime=\mu_j^\prime +1\}|$ whose expansion
\begin{equation}\label{br-pol}
P_{\lambda/\mu}(x; q ,t ,\mathbf{t})=
\sum_{0\leq k\leq d} B_{\lambda /\mu}^k (q,t,\mathbf{t})
\langle x; t_0\rangle_{q,k}
\end{equation}
in the basis of the interpolation polynomials in one variable
\begin{equation}
\langle x; t_0\rangle_{q,k}:=\langle x; t_0\rangle \langle x; q t_0\rangle\cdots \langle x; q^{k-1}t_0\rangle \qquad (\text{with}\
\langle x; t_0\rangle_{q,0}:=1)
 \end{equation}
has coefficients that are
given explicitly by the Macdonald-Koornwinder Pieri coefficients in $m=\lambda_1$ variables:
\begin{equation}\label{br-coef}
B_{\lambda/\mu}^k (q,t,\mathbf{t})=
 (-1)^{k+|\lambda|-|\mu|} C^{(n+1)^m-\lambda^\prime,m}_{n^m-\mu^\prime,m-k}(t,q,\mathbf{t}) \quad (k=0,\ldots ,d).
 \end{equation}
 \end{subequations}
\end{theorem}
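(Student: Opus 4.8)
The plan is to derive the branching formula from two Cauchy-type identities combined with the Pieri formula \eqref{pieri}, in the spirit of the proof of the classical branching formula in \cite[Ch.~VI.7]{mac:symmetric}. The organizing principle is a duality: adjoining one variable to a Macdonald-Koornwinder polynomial in the variables $z_1,\dots,z_{n+1}$ is exchanged, after passing to a dual set of $m=\lambda_1$ variables $w_1,\dots,w_m$, conjugating the indexing partitions and interchanging $q\leftrightarrow t$, with the action of a one-column operator $E_{m-k}$ via the Pieri rule. This is precisely the mechanism that accounts for the appearance of the conjugates $\lambda',\mu'$, the parameter swap $(t,q,\mathbf{t})$, and---as explained below---the rectangular complements in \eqref{br-coef}.

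First I would invoke Mimachi's Cauchy formula \cite{mim:duality}, which expands the polynomial reproducing kernel $\prod_{i=1}^{N}\prod_{j=1}^{m}\langle z_i;w_j\rangle$ as a finite sum over partitions fitting inside the $N\times m$ rectangle, pairing $P_\kappa(z;q,t,\mathbf{t})$ with the dual polynomial $P_{N^m-\kappa'}(w;t,q,\mathbf{t})$ up to an explicit sign. The rectangular complement $N^m-\kappa'$ is built into this dual index; this is the source of the complements $(n+1)^m-\lambda'$ (for $N=n+1$) and $n^m-\mu'$ (for $N=n$) in the statement. Taking $N=n+1$ and splitting off the last variable $x$, the kernel factors as $\prod_{i=1}^{n}\prod_{j=1}^m\langle z_i;w_j\rangle\cdot\prod_{j=1}^m\langle x;w_j\rangle$. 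The first factor I would expand by Mimachi's formula in the $n$ variables $z_1,\dots,z_n$, producing $P_\mu(z;q,t,\mathbf{t})$ paired with $P_{n^m-\mu'}(w;t,q,\mathbf{t})$; the single-variable second factor I would expand by the column-row specialization of Okounkov's interpolation Cauchy formula \cite{ouk:bc-type}, writing $\prod_{j=1}^m\langle x;w_j\rangle$ as a sum over $k$ of the one-variable interpolation polynomials $\langle x;t_0\rangle_{q,k}$ against the one-column polynomials $E_{m-k}(w;q,t_0)$, with an explicit sign.

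Multiplying these two expansions and invoking the Pieri formula \eqref{pieri} with interchanged parameters, I would expand the $w$-product $E_{m-k}(w;q,t_0)\,P_{n^m-\mu'}(w;t,q,\mathbf{t})$ into the dual basis, with coefficients $C^{\kappa,m}_{n^m-\mu',m-k}(t,q,\mathbf{t})$. On the other hand, Mimachi's formula applied directly to all $n+1$ variables expands the same kernel as $\sum_\lambda P_\lambda(z,x;q,t,\mathbf{t})P_{(n+1)^m-\lambda'}(w;t,q,\mathbf{t})$, again up to an explicit sign. Since the dual polynomials $P_\kappa(w;t,q,\mathbf{t})$ are linearly independent, I would equate the coefficients of $P_{(n+1)^m-\lambda'}(w;t,q,\mathbf{t})$ on the two sides; this selects $\kappa=(n+1)^m-\lambda'$ and expresses $P_\lambda(z,x;q,t,\mathbf{t})$ as a sum of $P_\mu(z;q,t,\mathbf{t})\langle x;t_0\rangle_{q,k}$ with coefficients proportional to $C^{(n+1)^m-\lambda',m}_{n^m-\mu',m-k}(t,q,\mathbf{t})$, thereby reproducing \eqref{br-pol} and \eqref{br-coef}.

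The main obstacle is the bookkeeping in this last identification rather than any single hard estimate. I expect three points to require care. First, one must track the three signs---the two from Mimachi's formula in $n$ and in $n+1$ variables and the one from the interpolation Cauchy formula---and check that they combine to exactly $(-1)^{k+|\lambda|-|\mu|}$. Second, one must verify that the support of the branching sum matches: since conjugation interchanges horizontal and vertical strips, the condition $\mu\preceq\lambda$ (adjoining at most two horizontal strips) should translate into the proximity relation $n^m-\mu'\sim_{m-k}(n+1)^m-\lambda'$ governing the nonvanishing of the Pieri coefficient, and the resulting range of $k$ should be precisely $0\le k\le d$ with $d=|\{j:\lambda_j'=\mu_j'+1\}|$. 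Third, one must confirm that the one-variable interpolation polynomials $\langle x;t_0\rangle_{q,k}$ emerging from the column-row Cauchy formula carry exactly the parameters $(q,t_0)$ demanded by \eqref{br-pol}, consistently with the $q\leftrightarrow t$ interchange on the dual side. Carrying the hatted parameters $\hat t_0,\hat\tau_j$ and the rectangular complementation faithfully through both Cauchy formulas is the delicate, computational core; the remainder is a formal manipulation of the three identities.
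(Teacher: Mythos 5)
Your proposal follows essentially the same route as the paper's own proof: factor the kernel $\prod_{i,j}\langle z_i;w_j\rangle$ by splitting off the last variable, expand the factors via Mimachi's Cauchy formula and the column--row case of Okounkov's interpolation Cauchy formula, apply the Pieri rule \eqref{pieri} on the $m$-variable dual side, and compare coefficients by linear independence; the support condition you flag is precisely the paper's Lemma ($n^m-\mu'\sim_{m-k}(n+1)^m-\lambda'$ iff $\mu\preceq\lambda$ and $0\leq k\leq d$), proved there by exactly the conjugation/complementation argument exchanging horizontal and vertical strips that you indicate. The only cosmetic difference is that you build the $q\leftrightarrow t$ interchange into the dual $w$-variables from the outset, whereas the paper keeps unswapped parameters on the $m$-variable side and performs the relabeling as a final step.
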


It is well-known \cite{koo:askey-wilson} that in the case of only a single variable the Macdonald-Koornwinder polynomials  reduce to the five-parameter  monic Askey-Wilson polynomials $P_m (z; q , \mathbf{t})$, $m=0,1,2,\ldots$ \cite{ask-wil:some}. 
On the other hand, if we formally set $n=0$ and $\mu=0$ then (the proof of) Theorem \ref{branching:thm} remains valid. This gives rise to the following expansion of the Askey-Wilson polynomials in terms of the one-variable interpolation polynomials $\langle x; t_0\rangle_{q,k}$.
\begin{corollary}[Askey-Wilson Polynomials]\label{aw:cor} The monic Askey-Wilson polynomial of degree $m$ is given by
\begin{subequations}
\begin{equation}\label{awp}
P_m (z; q , \mathbf{t}) =\sum_{0\leq k\leq m} B^k_{m/0}(q,\mathbf{t})\langle z; t_0\rangle_{q,k} 
\end{equation}
with
\begin{equation}\label{awc}
B^k_{m/0}(q,\mathbf{t})=(-1)^{m+k} C^{0^m,m}_{0^m,m-k}(t,q,\mathbf{t})  .
\end{equation}
\end{subequations}
\end{corollary}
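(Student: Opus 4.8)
The plan is to derive Corollary~\ref{aw:cor} as the degenerate instance $n=0$, $\mu=0$ of the branching formula. The first step is to invoke Koornwinder's reduction \cite{koo:askey-wilson}: in a single variable the parameter $t$ enters the Macdonald--Koornwinder polynomial only through the pairwise factors indexed by $j<k$, of which there are none, so that $P_{(m)}(x;q,t,\mathbf{t})$ is independent of $t$ and coincides with the monic five-parameter Askey--Wilson polynomial $P_m(x;q,\mathbf{t})$.

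I would then specialize Theorem~\ref{branching:thm} to $\lambda=(m)\in\Lambda_1$, that is to $n=0$. Since $\Lambda_0$ contains only the empty partition, the proximity constraint $\mu\preceq\lambda$ on the right-hand side admits the single qualifying term $\mu=0$ (a single row being a horizontal strip), whose zero-variable factor $P_\mu$ equals $1$. The branching formula hence collapses to the identity $P_m(x;q,\mathbf{t})=P_{(m)/0}(x;q,t,\mathbf{t})$, and it remains to read off the degree and the coefficients of the one-variable branching polynomial from \eqref{br-pol}--\eqref{br-coef}.

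For the degree, conjugation carries $\lambda=(m)$ to $\lambda^\prime=1^m\in\Lambda_m$ and $\mu=0$ to $\mu^\prime=0^m$; as $\lambda^\prime_j=\mu^\prime_j+1$ for every $j=1,\ldots,m$, the degree formula yields $d=m$, in agreement with $\deg P_m=m$. For the coefficients I would substitute $n=0$ into \eqref{br-coef}: by the complementation convention $(m^n-\mu)_j=m-\mu_{n+1-j}$ one computes $(n+1)^m-\lambda^\prime=1^m-1^m=0^m$ and $n^m-\mu^\prime=0^m-0^m=0^m$, while $|\lambda|-|\mu|=m$, so that $B^k_{(m)/0}(q,t,\mathbf{t})=(-1)^{m+k}C^{0^m,m}_{0^m,m-k}(t,q,\mathbf{t})$, which is exactly \eqref{awc}. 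Since the basis polynomials $\langle x;t_0\rangle_{q,k}$ ($k=0,\ldots,m$) have pairwise distinct degrees in $x+x^{-1}$ and are therefore linearly independent, while the left-hand side carries no $t$-dependence, the expansion is unique and the displayed coefficient must itself be independent of $t$; this justifies the notation $B^k_{m/0}(q,\mathbf{t})$.

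The single delicate point, and the main obstacle, is that one cannot simply quote the statement of Theorem~\ref{branching:thm} at $n=0$: one must check that its proof in Section~\ref{sec4} survives the formal substitution. I would accordingly revisit each ingredient of that argument---Mimachi's Cauchy formula \cite{mim:duality}, the column--row specialization of the Cauchy formula for the interpolation polynomials \cite{ouk:bc-type}, and the Pieri expansion \eqref{pieri}---and confirm that none of them degenerates or tacitly requires $n\geq 1$ at the boundary. Granting this, Corollary~\ref{aw:cor} follows.
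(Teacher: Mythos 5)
Your proposal follows the paper's own route exactly: the paper derives Corollary~\ref{aw:cor} precisely by formally setting $n=0$ and $\mu=0$ in (the proof of) Theorem~\ref{branching:thm}, and your explicit computations of $\lambda^\prime=1^m$, $\mu^\prime=0^m$, $d=m$, and the coefficients \eqref{awc}, together with the linear-independence argument for the $t$-independence of the coefficients, are all correct. Your closing caveat---that one must verify the proof in Section~\ref{sec4} survives the formal substitution $n=0$ rather than merely quoting the theorem---is the very point the paper makes (and, like you, the paper asserts rather than spells out this verification).
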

This formula for the Askey-Wilson polynomials amounts to the $n=1$ case of Okounkov's binomial formula for the Macdonald-Koornwinder polynomials \cite[Thm. 7.1]{ouk:bc-type} with the binomial coefficients written explicitly in terms of the $m$-variable
Macdonald-Koornwinder Pieri coefficients. Notice that since the Askey-Wilson polynomials on the LHS are independent of $t$, it follows that the $t$-dependence of the corresponding branching coefficients drops out as well.
In this special situation, alternative expressions for
the relevant binomial coefficients are available in a much more compact form
\cite[Prp. 4.1]{rai:bcn-symmetric} and the binomial formula is in fact seen to reduce to the usual ${}_4\phi_3$ representation of the Askey-Wilson polynomial \cite[p. 25]{kom-nou-shi:kernel}.

By iterating the branching formula in Theorem \ref{branching:thm}, one finds the general Macdonald-Koornwinder branching polynomial as a sum of factorized contributions over ascending chains of partitions.
\begin{corollary}[Branching Polynomials]
For $\lambda\in \Lambda_{n+l}$, one has that
\begin{align}
&P_\lambda (z_1,\ldots ,z_n,x_1,\ldots, x_l ; q ,t ,\mathbf{t})= \\
&\sum_{\substack{\mu^{(i)}\in\Lambda_{n+i}, i=0,\ldots ,l\\ \mu=\mu^{(0)} \preceq \mu^{(1)}\preceq \cdots \preceq \mu^{(l)}=\lambda}}  P_\mu (z_1,\ldots ,z_n ; q ,t ,\mathbf{t})
\prod_{1\leq i\leq l} P_{\mu^{(i)}/\mu^{(i-1)}}(x_i; q ,t ,\mathbf{t}). \nonumber
\end{align}
\end{corollary}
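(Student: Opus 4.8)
The plan is to argue by induction on the number $l$ of variables peeled off from the $z$'s, taking Theorem \ref{branching:thm} as the one-step ($l=1$) case and using it as the only substantive input. For the base case, when $l=0$ the asserted identity degenerates: the chain condition forces $\mu=\mu^{(0)}=\lambda$, the sum collapses to a single term, and the empty product $\prod_{1\leq i\leq 0}(\cdots)$ equals $1$, so the claim reduces to the tautology $P_\lambda(z_1,\ldots,z_n)=P_\lambda(z_1,\ldots,z_n)$. (Alternatively one may take $l=1$ as the base case, which is verbatim Theorem \ref{branching:thm}.)

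For the inductive step, I would assume the formula holds with $l-1$ in place of $l$ (for every choice of $n$) and let $\lambda\in\Lambda_{n+l}$. First I would single out the last variable $x_l$ and apply Theorem \ref{branching:thm} with $n+l-1$ playing the role of ``$n$'' there and $x_l$ playing the role of the adjoined variable ``$x$''. Writing $\mu^{(l)}:=\lambda$, this yields
$$P_\lambda(z_1,\ldots,z_n,x_1,\ldots,x_l)=\sum_{\substack{\mu^{(l-1)}\in\Lambda_{n+l-1}\\ \mu^{(l-1)}\preceq\lambda}} P_{\mu^{(l-1)}}(z_1,\ldots,z_n,x_1,\ldots,x_{l-1})\, P_{\mu^{(l)}/\mu^{(l-1)}}(x_l),$$
where each $\mu^{(l-1)}\in\Lambda_{n+l-1}$ is regarded as an element of $\Lambda_{n+l}$ via the standard embedding, so that the relation $\mu^{(l-1)}\preceq\mu^{(l)}$ is meaningful.

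Next I would expand each factor $P_{\mu^{(l-1)}}(z_1,\ldots,z_n,x_1,\ldots,x_{l-1})$, a polynomial in $n+(l-1)$ variables, by invoking the induction hypothesis; this rewrites it as a sum over ascending chains $\mu=\mu^{(0)}\preceq\mu^{(1)}\preceq\cdots\preceq\mu^{(l-1)}$ with $\mu^{(i)}\in\Lambda_{n+i}$, weighted by $P_\mu(z_1,\ldots,z_n)\prod_{1\leq i\leq l-1}P_{\mu^{(i)}/\mu^{(i-1)}}(x_i)$. Substituting this into the displayed identity and interchanging the two finite sums merges the single relation $\mu^{(l-1)}\preceq\mu^{(l)}=\lambda$ with the chain $\mu^{(0)}\preceq\cdots\preceq\mu^{(l-1)}$ into one ascending chain $\mu=\mu^{(0)}\preceq\mu^{(1)}\preceq\cdots\preceq\mu^{(l)}=\lambda$ with $\mu^{(i)}\in\Lambda_{n+i}$; simultaneously the residual factor $P_{\mu^{(l)}/\mu^{(l-1)}}(x_l)$ extends the product $\prod_{1\leq i\leq l-1}$ to $\prod_{1\leq i\leq l}$. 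This is precisely the claimed formula for $l$, which completes the induction.

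The only point requiring genuine care, and the place I would be most careful, is the bookkeeping of the chain of embeddings $\Lambda_n\subset\Lambda_{n+1}\subset\cdots\subset\Lambda_{n+l}$ together with the adjoined parts of size zero, so as to confirm that combining the outer relation $\mu^{(l-1)}\preceq\lambda$ (in $\Lambda_{n+l}$) with the inductively produced chain (living in $\Lambda_{n+l-1}$) reproduces exactly the index set described in the statement, with no chain lost or double-counted. Since every one-variable factor $P_{\mu^{(i)}/\mu^{(i-1)}}$ is already the explicit polynomial furnished by \eqref{br-pol}--\eqref{br-coef} in Theorem \ref{branching:thm}, no additional computation of coefficients is needed.
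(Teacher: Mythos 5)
Your proof is correct and matches the paper exactly: the corollary is obtained there precisely ``by iterating the branching formula in Theorem \ref{branching:thm}'', which is just your induction on $l$ made explicit (peel off $x_l$ via the theorem with $n+l-1$ in place of $n$, then apply the inductive hypothesis and merge the chains under the standard embeddings $\Lambda_{n+i}\subset\Lambda_{n+i+1}$). Your attention to the embedding bookkeeping is sound, and no further input beyond Theorem \ref{branching:thm} is needed.
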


Setting $n=1$ in the latter formula, leads us to an explicit formula for the Macdonald-Koorwinder polynomials generalizing the formula for the Askey-Wilson polynomials in Corollary \ref{aw:cor}.
\begin{corollary}[Macdonald-Koornwinder Polynomials]
For $\lambda\in \Lambda_{n}$, the monic Macdonald-Koornwinder polynomial is given by
\begin{equation}
P_\lambda (z_1,\ldots ,z_n; q ,t ,\mathbf{t})= 
\sum_{\substack{\mu^{(i)}\in\Lambda_{i}, i=1,\ldots ,n\\  \mu^{(1)}\preceq \mu^{(2)}\preceq \cdots \preceq \mu^{(n)}=\lambda}} \prod_{1\leq i\leq n} P_{\mu^{(i)}/\mu^{(i-1)}}(z_i; q ,t ,\mathbf{t}), 
\end{equation}
where $P_{\mu^{(1)}/\mu^{(0)}}(z; q ,t ,\mathbf{t}) := P_{\mu^{(1)}}(z; q ,\mathbf{t}) $ \eqref{awp}, \eqref{awc} by convention.
\end{corollary}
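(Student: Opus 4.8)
The plan is to obtain this formula as the single-spectator-variable specialization of the preceding Branching Polynomials corollary, so no fresh induction is required. In that corollary the ambient partition lives in $\Lambda_{n+l}$ and the expansion carries $n$ retained variables $z_1,\ldots ,z_n$ together with $l$ successively branched variables $x_1,\ldots ,x_l$. First I would set the number of retained variables equal to one and, writing the target dimension as $n$, take $l=n-1$, so that $\lambda$ now ranges over $\Lambda_{1+l}=\Lambda_n$ exactly as asserted. The lone remaining retained factor $P_{\mu^{(0)}}(z_1;q,t,\mathbf{t})$ is then a one-variable Macdonald-Koornwinder polynomial, i.e. an Askey-Wilson polynomial; invoking the observation recorded after Corollary \ref{aw:cor} that its $t$-dependence is vacuous, I would rewrite it as $P_{\mu^{(0)}}(z_1;q,\mathbf{t})$, which is precisely the shape demanded by the stated convention for the first factor.

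The rest is purely a relabeling. I would rename the variables by $z_1\mapsto z_1$ and $x_i\mapsto z_{i+1}$ for $i=1,\ldots ,n-1$, turning the argument list into $z_1,\ldots ,z_n$, and reindex the ascending chain through the shift $\nu^{(j)}:=\mu^{(j-1)}$ for $j=1,\ldots ,n$. Since $\mu^{(i)}\in\Lambda_{1+i}$ after the specialization, this shift places $\nu^{(j)}\in\Lambda_j$ and converts the chain condition $\mu^{(0)}\preceq\cdots\preceq\mu^{(n-1)}=\lambda$ into $\nu^{(1)}\preceq\cdots\preceq\nu^{(n)}=\lambda$, matching the index ranges in the target formula. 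Under this dictionary the branched product $\prod_{1\leq i\leq l}P_{\mu^{(i)}/\mu^{(i-1)}}(x_i)$ becomes $\prod_{2\leq j\leq n}P_{\nu^{(j)}/\nu^{(j-1)}}(z_j)$, while the leftover Askey-Wilson factor $P_{\nu^{(1)}}(z_1;q,\mathbf{t})$ is absorbed as the $j=1$ term exactly by the convention $P_{\mu^{(1)}/\mu^{(0)}}(z;q,t,\mathbf{t}):=P_{\mu^{(1)}}(z;q,\mathbf{t})$, yielding $\prod_{1\leq j\leq n}P_{\nu^{(j)}/\nu^{(j-1)}}(z_j)$.

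I expect no conceptual difficulty here; the only point that needs care is the off-by-one bookkeeping in the partition sizes and chain indices, namely checking that after the shift each $\nu^{(j)}$ genuinely lands in $\Lambda_j$ and that the double-horizontal-strip relation $\preceq$ is preserved verbatim under the renaming. Everything else is a direct transcription of the preceding Branching Polynomials corollary, which in turn follows by iterating the Branching Formula of Theorem \ref{branching:thm}.
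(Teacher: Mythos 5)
Your proposal is correct and follows the paper's own route: the paper likewise obtains this corollary by setting the number of retained variables to one in the Branching Polynomials corollary (itself an iteration of Theorem \ref{branching:thm}), identifying the lone one-variable factor with the Askey--Wilson polynomial of Corollary \ref{aw:cor}, and absorbing it into the product via the stated convention. The relabeling and index-shift bookkeeping you spell out is exactly the implicit content of the paper's one-line derivation.
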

This is the analog of a classic formula for the usual permutation-symmetric Macdonald polynomials in terms of semistandard tableaux,
cf.  e.g. \cite[Ch. VI.7]{mac:symmetric}  and  \cite[Sec. 1]{las-war:branching}.

\begin{remark}\label{rem-m}
It follows from the proof in Section \ref{sec4} below that the branching formula in Theorem \ref{branching:thm} holds in fact {\em for any} $m\geq \lambda_1$, i.e. the expressions for the branching coefficients $B_{\lambda/\mu}^k$ in Eq. \eqref{br-coef}  {\em do not depend
on} $m\geq \lambda_1$.
\end{remark}

\begin{remark}
For $x=t_0q^h$, $h=0,1,2,\ldots$, the degree of the branching polynomials $P_{\lambda/\mu}(x; q ,t ,\mathbf{t})$ \eqref{br-pol} remains bounded by $h$ as the sum in question truncates beyond $k=h$.  In particular, for $x=t_0$ only the first (constant) term survives and the complexity of the branching coefficients reduces considerably
(cf. \cite[p. 100]{rai:bcn-symmetric}):
\begin{equation}
P_{\lambda/\mu}(t_0; q ,t ,\mathbf{t})=B_{\lambda/\mu}^0 (q,t,\mathbf{t})=
 (-1)^{|\lambda|-|\mu|} C^{(n+1)^m-\lambda^\prime,m}_{n^m-\mu^\prime,m}(t,q,\mathbf{t})  .
\end{equation}
\end{remark}

\section{Proof of the branching formula}\label{sec4}
Let
\begin{equation}
\prod (x_1,\ldots,x_m;z_1,\ldots ,z_n) :=\prod_{\substack{1\leq i\leq m\\1\leq j\leq n}} \langle x_i;z_j\rangle .
\end{equation}
Mimachi's Cauchy formula \cite[Thm. 2.1]{mim:duality} states that this kernel expands in terms of
Macdonald-Koornwinder polynomials as:
\begin{eqnarray}\label{cauchy:mim}
\lefteqn{\prod (x_1,\ldots,x_m;z_1,\ldots ,z_n)= }&& \\
&& \sum_{\lambda\subset n^m} (-1)^{mn-|\lambda |}
P_\lambda (x_1,\ldots ,x_m;q,t,\mathbf{t}) P_{m^n-\lambda^\prime}  (z_1,\ldots ,z_n;t,q,\mathbf{t}) .\nonumber
\end{eqnarray}
A similar expansion of the kernel at issue in terms of Okounkov's hyperoctahedral interpolation polynomials is given by the Cauchy formula in \cite[Thm. 6.2]{ouk:bc-type} (with shifted variables as in \cite[Thm. 3.16]{rai:bcn-symmetric}). For $n=1$, the latter Cauchy formula becomes of the form \cite[Lem. 5.1]{kom-nou-shi:kernel}:
\begin{equation}\label{cauchy:ouk}
\prod (x_1,\ldots,x_m;z)=
\sum_{0\leq r\leq m} (-1)^{m-r} E_{r}(x_1,\ldots,x_m;t,t_0) \,  \langle z;t_0\rangle_{t,m-r} .
\end{equation}
By expanding the first two factors of the trivial identity
\begin{equation*}
\prod (x_1,\ldots,x_m;z_1,\ldots ,z_n,z)=\prod (x_1,\ldots,x_m;z_1,\ldots ,z_n)\prod (x_1,\ldots,x_m;z)
\end{equation*}
by means of Mimachi's Cauchy formula \eqref{cauchy:mim} and the last factor by means of the `column-row' case of Okounkov's Cauchy formula in Eq. \eqref{cauchy:ouk}, one arrives at the equality
\begin{equation*}
\sum_{\lambda\subset (n+1)^m} (-1)^{m(n+1)-|\lambda |}
P_\lambda (x_1,\ldots ,x_m;q,t,\mathbf{t}) P_{m^{n+1}-\lambda^\prime} (z_1,\ldots ,z_n,z;t,q,\mathbf{t}) 
\end{equation*}
\begin{align*}
 = \sum_{\substack{\mu\subset n^m\\ 0\leq r\leq m}} (-1)^{m(n+1)-|\mu |-r}& \Bigl( 
P_{m^n-\mu^\prime} (z_1,\ldots ,z_n;t,q,\mathbf{t})  \langle z;t_0\rangle_{t,m-r}  \\
& \times  E_{r}(x_1,\ldots,x_m;t,t_0) P_{\mu} (x_1,\ldots ,x_m;q,t,\mathbf{t})
  \Bigr) .
\end{align*}
Upon rewriting the RHS with the aid of the Pieri formula \eqref{pieri}
\begin{align*}
 = \sum_{\substack{\mu\subset n^m\\ 0\leq r\leq m}} (-1)^{m(n+1)-|\mu |-r}& \Bigl( 
 P_{m^n-\mu^\prime} (z_1,\ldots ,z_n;t,q,\mathbf{t})  \langle z;t_0\rangle_{t,m-r}
\\
& \times    \sum_{\substack{\lambda\subset (n+1)^m \\ \lambda\sim_{r} \mu }}C^{\lambda,m}_{\mu , r}(q,t,\mathbf{t}) P_{\lambda} (x_1,\ldots ,x_m;q,t,\mathbf{t}) 
    \Bigr)  ,
\end{align*}
and reordering the sums
\begin{align*}
 =  & \sum_{\lambda\subset (n+1)^m }  (-1)^{m(n+1)-|\lambda |}  \Bigl( P_{\lambda} (x_1,\ldots ,x_m;q,t,\mathbf{t})  \\
& \times \sum_{\substack{\mu\subset n^m,  \, 0\leq r\leq m\\ \mu\sim_{r} \lambda}} 
 (-1)^{r+|\lambda|-|\mu|} C^{\lambda,m}_{\mu , r}(q,t,\mathbf{t})   
P_{m^n-\mu^\prime} (z_1,\ldots ,z_n;t,q,\mathbf{t})  \langle z;t_0\rangle_{t,m-r}    \Bigr)  ,
\end{align*}
one deduces by comparing with the LHS that for any $\lambda\subset (n+1)^m$:
\begin{align*}
&P_{m^{n+1}-\lambda^\prime} (z_1,\ldots ,z_n,z;t,q,\mathbf{t}) 
 =    \\
&  \sum_{\substack{\mu\subset n^m, \, 0\leq r\leq m\\ \mu\sim_{r} \lambda }} 
 (-1)^{r+|\lambda|-|\mu|} C^{\lambda,m}_{\mu , r}(q,t,\mathbf{t})   
P_{m^n-\mu^\prime} (z_1,\ldots ,z_n;t,q,\mathbf{t})  \langle z;t_0\rangle_{t,m-r}      ,
\end{align*}
i.e. for any $\lambda\subset m^{n+1}$:
\begin{align}\label{beq}
&P_{\lambda} (z_1,\ldots ,z_n,z;q,t,\mathbf{t}) 
 =    \\
&  \sum_{\substack{\mu\subset m^n, \, 0\leq r\leq m \\ n^m- \mu^\prime \sim_{r} (n+1)^m-\lambda^\prime }} 
 (-1)^{m-r+|\lambda|-|\mu|} C^{(n+1)^m-\lambda^\prime,m}_{n^m-\mu^\prime ,r}(t,q,\mathbf{t})   
P_{\mu} (z_1,\ldots ,z_n;q,t,\mathbf{t})  \langle z;t_0\rangle_{q,m-r}      .\nonumber
\end{align}
To finish the proof we invoke the following lemma.
\begin{lemma}
Let  $\lambda\subset m^{n+1}$ and $\mu\subset m^n$. Then
$n^m- \mu^\prime \sim_{r} (n+1)^m-\lambda^\prime$ iff
$\mu\preceq \lambda$ and
 $m-d\leq r\leq m$ with $d=\{ 1\leq j\leq m\mid \lambda_j^\prime=\mu_j^\prime +1\}$. 
\end{lemma}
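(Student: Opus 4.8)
The plan is to reduce the statement entirely to a componentwise comparison of the two complemented conjugate partitions. Write $\alpha=(n+1)^m-\lambda'$ and $\beta=n^m-\mu'$, both of which lie in $\Lambda_m$ since $\lambda\subset m^{n+1}$ and $\mu\subset m^n$ force $\lambda'_k\le n+1$ and $\mu'_k\le n$. From the definition of the rectangular complement one computes directly that $\alpha_j-\beta_j=1-(\lambda'_{m+1-j}-\mu'_{m+1-j})$ for $j=1,\dots,m$. Setting $\delta_k:=\lambda'_k-\mu'_k$, the whole problem is thus governed by the sequence $(\delta_k)_{k=1}^m$ through the relation $\alpha_j-\beta_j=1-\delta_{m+1-j}$.

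Next I would unravel the proximity relation $\beta\sim_r\alpha$, using that it asks only for the \emph{existence} of a common $\rho\in\Lambda_m$ with $\rho\subset\alpha,\beta$, both skew shapes $\alpha/\rho$ and $\beta/\rho$ vertical strips, and $|\alpha/\rho|+|\beta/\rho|\le r$. The key observation is that such a $\rho$ exists at all precisely when $|\alpha_j-\beta_j|\le 1$ for every $j$: the componentwise minimum $\rho_j=\min(\alpha_j,\beta_j)$ is automatically weakly decreasing and realizes both differences in $\{0,1\}$, whereas a gap of $2$ in some coordinate can never be absorbed by a pair of vertical strips. In terms of $\delta$ this condition reads $\delta_k\in\{0,1,2\}$ for all $k$. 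Moreover any admissible $\rho$ satisfies $\rho_j\le\min(\alpha_j,\beta_j)$, so the maximal choice $\rho=\min(\alpha,\beta)$ minimizes $|\alpha/\rho|+|\beta/\rho|=\sum_j(\alpha_j+\beta_j-2\rho_j)$; for it this total equals $\sum_j|\alpha_j-\beta_j|$, i.e. the number of indices $j$ with $\alpha_j\ne\beta_j$, equivalently the number of $k$ with $\delta_k\ne 1$, which is exactly $m-d$. Hence, within the admissible range $0\le r\le m$, the relation $\beta\sim_r\alpha$ holds iff $\delta_k\in\{0,1,2\}$ for all $k$ and $m-d\le r\le m$.

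It then remains to identify the pointwise condition $\delta_k\in\{0,1,2\}$ with $\mu\preceq\lambda$. The inequalities $\delta_k\ge 0$ are just $\mu'\subset\lambda'$, i.e. $\mu\subset\lambda$. For the upper bound I would pass through conjugation: $\mu\preceq\lambda$ means there is $\nu$ with $\mu\subset\nu\subset\lambda$ and $\lambda/\nu,\nu/\mu$ horizontal strips, equivalently $\mu'\subset\nu'\subset\lambda'$ with $\lambda'/\nu',\nu'/\mu'$ vertical strips, which forces $\delta_k\le 2$. The converse—splitting a skew shape $\lambda'/\mu'$ with all $\delta_k\le 2$ into two vertical strips—I would establish by a greedy construction, choosing $\nu'_k\in[\max(\mu'_k,\lambda'_k-1),\min(\lambda'_k,\mu'_k+1)]$ from $k=m$ downward via $\nu'_k=\max(\mu'_k,\lambda'_k-1,\nu'_{k+1})$ and checking, using that $\lambda'$ and $\mu'$ are weakly decreasing, that $\nu'_{k+1}\le\min(\lambda'_k,\mu'_k+1)$ so that the choice stays within its upper bound and yields a partition. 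Combining the three steps gives the claimed equivalence.

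The main obstacle I anticipate is twofold, and both parts are bookkeeping rather than deep. First, one must keep the index reversals of the two distinct rectangular complements $(n+1)^m$ and $n^m$ straight, so that the gap comes out precisely as $\alpha_j-\beta_j=1-\delta_{m+1-j}$ and not something shifted. Second, one must verify that the greedy interpolant $\nu'$ genuinely respects its upper bounds, which is exactly where the monotonicity of $\lambda'$ and $\mu'$ enters. Once these are in place, matching the minimal strip total to precisely $d=|\{k:\delta_k=1\}|$ is immediate from the trichotomy $\delta_k\in\{0,1,2\}$.
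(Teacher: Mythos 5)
Your proof is correct, but for the core equivalence it takes a more computational route than the paper. The paper splits the lemma into the same two halves you do---(i) existence of a common subpartition with vertical-strip differences is equivalent to $\mu\preceq\lambda$, and (ii) the minimal value of $|\alpha/\rho|+|\beta/\rho|$ equals $m-d$---and its part (ii) is essentially your minimality computation. For part (i), however, the paper never introduces your pointwise criterion $0\le\lambda'_k-\mu'_k\le 2$: it runs a chain of witness-to-witness equivalences, first conjugating (so that the vertical strips $(n^m-\mu')/\nu$ and $((n+1)^m-\lambda')/\nu$ become the horizontal strips $(m^n-\mu)/\kappa$ and $(m^{n+1}-\lambda)/\kappa$ with $\kappa=\nu'$) and then complementing inside the rectangles (replacing $\kappa$ by $m^n-\kappa$), which lands exactly on the definition of $\mu\preceq\lambda$. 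Your replacement---reducing both relations to the trichotomy $\delta_k\in\{0,1,2\}$, with the explicit witnesses $\rho=\min(\alpha,\beta)$ on the $\sim_r$ side and the greedy interpolant $\nu'_k=\max(\mu'_k,\lambda'_k-1,\nu'_{k+1})$ on the $\preceq$ side---is more elementary and self-contained, and it has a genuine expository advantage: the paper's first biconditional in part (i) (that $\sim_m$ holds as soon as some common $\nu$ with vertical-strip differences exists, with no cardinality constraint) silently relies on the fact that the minimal strip total is automatically $\le m$, which is only justified by the computation in its part (ii); your ordering, which establishes the minimum $m-d$ before $r$ enters the discussion, makes that point transparent. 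What the paper's duality argument buys in exchange is brevity and the absence of any case-by-case or recursive construction, since both strip conditions are transported wholesale by conjugation and complementation.
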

\begin{proof}
The statement of the lemma is immediate upon combining the following two properties: (i) $n^m- \mu^\prime \sim_{m} (n+1)^m-\lambda^\prime$ iff $\mu\preceq \lambda$ and
(ii) $n^m- \mu^\prime \sim_{r} (n+1)^m-\lambda^\prime$  iff $n^m- \mu^\prime \sim_{m} (n+1)^m-\lambda^\prime$ and
$r\geq m-d$. 

Firstly, $n^m- \mu^\prime \sim_{m} (n+1)^m-\lambda^\prime$ $\Leftrightarrow$ $\exists\nu\subset n^m$ such that
$(n^m- \mu^\prime)/\nu$ and $((n+1)^m-\lambda^\prime)/\nu$ are vertical strips 
$\Leftrightarrow$ $\exists\kappa\subset m^n$ such that
$(m^n- \mu)/\kappa$ and $(m^{n+1}-\lambda)/\kappa$ are horizontal strips 
$\Leftrightarrow$ $\exists\kappa\subset m^n$ such that
$(m^n- \kappa)/\mu$ and $\lambda/(m^n-\kappa)$ are horizontal strips 
$\Leftrightarrow$ $\mu\preceq\lambda$, which proves part (i).

Secondly---assuming that $n^m- \mu^\prime \sim_{m} (n+1)^m-\lambda^\prime$ and picking  $\nu\subset n^m$ such that $(n^m- \mu^\prime )/\nu $ and $( (n+1)^m-\lambda^\prime)/\nu$ are vertical strips with
$| (n^m- \mu^\prime)/\nu |+| ( (n+1)^m-\lambda^\prime)/\nu | $ minimal---one has that
$| (n^m- \mu^\prime)/\nu |+| ( (n+1)^m-\lambda^\prime)/\nu |\leq r$ $\Leftrightarrow$
$|\{ 1\leq j\leq m \mid \lambda_j^\prime\neq \mu_j^\prime + 1\}|\leq r$ $\Leftrightarrow$
 $m-d\leq r$, which completes the proof of part (ii).
\end{proof}
The upshot is that Eq. \eqref{beq} can be rewritten as
\begin{align*}
&P_{\lambda} (z_1,\ldots ,z_n,z;q,t,\mathbf{t}) 
 =    \\
&  \sum_{\substack{\mu\subset m^n,\, \mu\preceq\lambda \\ m-d\leq r\leq m }} 
 (-1)^{m-r+|\lambda|-|\mu|} C^{(n+1)^m-\lambda^\prime,m}_{n^m-\mu^\prime ,r}(t,q,\mathbf{t})   
P_{\mu} (z_1,\ldots ,z_n;q,t,\mathbf{t})  \langle z;t_0\rangle_{q,m-r}      \nonumber
\end{align*}
and Theorem \ref{branching:thm}  follows (where our choice of picking $m$ equal to $\lambda_1$ corresponds to the minimal value of $m$ such that $\lambda\subset m^{n+1}$, cf. Remark \ref{rem-m} at the end of the previous section).

\section*{Acknowledgments} We are grateful to Alexei Borodin and Ivan Corwin for encouraging us to 
pursue an explicit branching formula for the Macdonald-Koornwinder polynomials.

\bibliographystyle{amsplain}

\end{document}